 \newtheoremstyle{mytheorem}% name % cf. thmtest.tex of AMSLaTeX
 {3pt}%      Space above
 {3pt}%      Space below
 {\slshape}% Body font
 {}%         Indent amount (empty = no indent,
\numberwithin{equation}{section}
\theoremstyle{mytheorem}
\newtheorem{theorem}{Theorem}[section]
\newtheorem{lemma}[theorem]{Lemma}
\newtheorem{claim}[theorem]{Claim}
\theoremstyle{definition}
\newtheorem{question}{Question}[section]
\newtheorem{remark}{Remark}[section]
\newcommand{\Keywords}[1]{\ifthenelse{\isempty{#1}}{}{\smallskip \smallskip \noindent \textbf{Keywords}. #1}}
\newcommand{\MSC}[2][2010]{\ifthenelse{\isempty{#2}}{}{\smallskip \smallskip \noindent \textbf{#1MSC}. #2}}
\newcommand{\abstractnote}[1]{\ifthenelse{\isempty{#1}}{}{\smallskip \smallskip \noindent \textsuperscript{\dag}#1}}
\def\specialsection{\@startsection{section}{1}%
  \z@{\linespacing\@plus\linespacing}{.5\linespacing}%
%  {\normalfont\centering}}% DELETED
  {\normalfont}}% NEW
\def\section{\@startsection{section}{1}%
  \z@{.7\linespacing\@plus\linespacing}{.5\linespacing}%
%  {\normalfont\scshape\centering}}% DELETED
  {\normalfont\scshape}}% NEW
\patchcmd{\@settitle}{\uppercasenonmath\@title}{\Large}{}{}
\patchcmd{\@settitle}{\begin{center}}{\begin{flushleft}}{}{}
\patchcmd{\@settitle}{\end{center}}{\end{flushleft}}{}{}
\patchcmd{\@setauthors}{\MakeUppercase}{\normalsize}{}{}
\patchcmd{\@setauthors}{\centering}{\raggedright}{}{}
\patchcmd{\section}{\scshape}{\large\bfseries}{}{}
\renewcommand{\@secnumfont}{\bfseries}
\patchcmd{\@startsection}{\@afterindenttrue}{\@afterindentfalse}{}{}
\patchcmd{\abstract}{\leftmargin3pc}{\leftmargin1pc}{}{}
\def\maketitle{\par
  \@topnum\z@ % this prevents figures from falling at the top of page 1
  \@setcopyright
  \thispagestyle{empty}% this sets first page specifications
  \ifx\@empty\shortauthors \let\shortauthors\shorttitle
  \else \andify\shortauthors
  \fi
  \@maketitle@hook
  \begingroup
  \@maketitle
  \toks@\@xp{\shortauthors}\@temptokena\@xp{\shorttitle}%
  \toks4{\def\\{ \ignorespaces}}% defend against questionable usage
  \edef\@tempa{%
    \@nx\markboth{\the\toks4
      \@nx\MakeUppercase{\the\toks@}}{\the\@temptokena}}%
  \@tempa
  \endgroup
  \c@footnote\z@
  \@cleartopmattertags
}
\title{Remarks on the distribution of the primitive roots of a prime}
\author[S. Chern]{Shane Chern}
\address{School of Mathematical Sciences, Zhejiang University, Hangzhou, 310027, China}
\email{\href{mailto:shanechern@zju.edu.cn}{shanechern@zju.edu.cn}; \href{mailto:chenxiaohang92@gmail.com}{chenxiaohang92@gmail.com}}
\date{}
\begin{document}

%{\footnotesize\noindent \textit{Preprint}, \arxiv{1602.07431}.\\
%Submitted to \textit{Amer. Math. Monthly}}
%
%\bigskip \bigskip

\maketitle

\begin{abstract}
Let $\mathbb{F}_p$ be a finite field of size $p$ where $p$ is an odd prime. Let $f(x)\in\mathbb{F}_p[x]$ be a polynomial of positive degree $k$ that is not a $d$-th power in $\mathbb{F}_p[x]$ for all $d\mid p-1$. Furthermore, we require that $f(x)$ and $x$ are coprime. The main purpose of this paper is to give an estimate of the number of pairs $(\xi,\xi^\alpha f(\xi))$ such that both $\xi$ and $\xi^\alpha f(\xi)$ are primitive roots of $p$ where $\alpha$ is a given integer. This answers a question of Han and Zhang.

\Keywords{Primitive root, character sum, Weil bound.}

\MSC{Primary 11A07; Secondary 11L40.}
\end{abstract}

\section{Introduction}

Let $a$ and $q$ be relatively prime integers, with $q\ge 1$. We know from the Euler-Fermat theorem that $a^\phi(q)\equiv 1\bmod q$, where $\phi(q)$ is the Euler totient function. We say an integer $f$ is the exponent of $a$ modulo $q$ if $f$ is smallest positive integer such that $a^f\equiv 1\bmod q$. If $f=\phi(q)$, then $a$ is called a primitive root of $q$. If $q$ has a primitive root $a$, then the group of the reduced residue classes mod $q$ is the cyclic group generated by the residue class $\hat{a}$. It is well-known that primitive roots exist only for the following moduli:
$$q=1,\ 2,\ 4,\ p^\alpha,\ \text{and}\ 2p^\alpha,$$
where $p$ is an odd prime and $\alpha\ge 1$. The reader may refer to Chapter 10 of T. M. Apostol's book \cite{Apo1976} for detailed contents.

There has been a long history studying the distribution of the primitive roots of a prime. In a recent paper, D. Han and W. Zhang \cite{Han2015} considered the number of pairs $(\xi,m\xi^k+n\xi)$ such that both $\xi$ and $m\xi^k+n\xi$ are primitive roots of an odd prime $p$ where $m$, $n$ and $k$ are given integers with $k\ne 1$ and $(mn,p)=1$. The reader may also find some descriptions of other interesting problems on primitive roots such as the Golomb's conjecture in \cite{Han2015} and references therein. After presenting their main results, Han and Zhang proposed the following
\begin{question}
Let $\mathbb{F}_p$ be a finite field of size $p$ and $f(x)$ be an irreducible polynomial in $\mathbb{F}_p[x]$. Whether there exists a primitive element $\xi\in\mathbb{F}_p$ such that $f(\xi)$ is also a primitive element in $\mathbb{F}_p$?
\end{question}

In this paper, we let $f(x)\in\mathbb{F}_p[x]$ be a polynomial of positive degree $k$ that is not a $d$-th power in $\mathbb{F}_p[x]$ for all $d\mid p-1$. Furthermore, we require that $f(x)$ and $x$ are coprime. Let $\alpha$ be a given integer, we denote by $N(\alpha,f;p)$ the number of pairs $(\xi,\xi^\alpha f(\xi))$ such that both $\xi$ and $\xi^\alpha f(\xi)$ are primitive roots of $p$. Our result is

\begin{theorem}
It holds that
\begin{equation}
N(\alpha,f;p)=(p-1-R(f))\left(\frac{\phi(p-1)}{p-1}\right)^2+ \theta k 4^{\omega(p-1)}\sqrt{p}\left(\frac{\phi(p-1)}{p-1}\right)^2,
\end{equation}
where $|\theta|< 1$, $\omega(n)$ denotes the number of distinct prime divisors of $n$, and $R(f)$ denotes the number of distinct zeros of $f(x)$ in $\mathbb{F}_p$.
\end{theorem}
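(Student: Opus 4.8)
The plan is to detect the primitive-root condition with the standard indicator function built from Dirichlet characters. Recall that for $a$ with $(a,p)=1$ one has
\[
\frac{\phi(p-1)}{p-1}\sum_{d\mid p-1}\frac{\mu(d)}{\phi(d)}\sum_{\operatorname{ord}\chi=d}\chi(a)
=\begin{cases}1 & \text{if $a$ is a primitive root of $p$},\\ 0 & \text{otherwise}.\end{cases}
\]
So I would write $N(\alpha,f;p)$ as a double sum over $\xi\in\mathbb{F}_p^\times$ with $f(\xi)\neq 0$ of the product of two such indicators, one for $\xi$ and one for $\xi^\alpha f(\xi)$. Expanding gives
\[
N(\alpha,f;p)=\left(\frac{\phi(p-1)}{p-1}\right)^2\sum_{d_1\mid p-1}\sum_{d_2\mid p-1}\frac{\mu(d_1)\mu(d_2)}{\phi(d_1)\phi(d_2)}\sum_{\operatorname{ord}\chi_1=d_1}\sum_{\operatorname{ord}\chi_2=d_2} S(\chi_1,\chi_2),
\]
where $S(\chi_1,\chi_2)=\sum_{\xi}\chi_1(\xi)\chi_2\bigl(\xi^\alpha f(\xi)\bigr)=\sum_{\xi}\chi_1\chi_2^\alpha(\xi)\,\chi_2(f(\xi))$, the sum being over $\xi\in\mathbb{F}_p^\times$ with $f(\xi)\neq 0$.

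The main term comes from $d_1=d_2=1$, i.e. $\chi_1=\chi_2=\chi_0$: then $S(\chi_0,\chi_0)$ counts the $\xi\in\mathbb{F}_p^\times$ with $f(\xi)\neq 0$, which is exactly $p-1-R(f)$, yielding the first term of the theorem. For all remaining pairs $(\chi_1,\chi_2)\neq(\chi_0,\chi_0)$ I need the bound $|S(\chi_1,\chi_2)|\le k\sqrt p$ (up to a harmless additive adjustment absorbed into the $\theta$). This is where the Weil bound enters: $S(\chi_1,\chi_2)$ is a multiplicative character sum of the shape $\sum_{\xi}\psi(g(\xi))$ where $g$ is a rational function, and the key hypothesis that $f$ is not a $d$-th power for any $d\mid p-1$ — together with $(f(x),x)=1$ — guarantees that $\chi_1\chi_2^\alpha(x)\,\chi_2(f(x))$ is not the restriction of a character of the form $h(x)^{\operatorname{ord}\chi}$, so the relevant curve is genuinely covered and Weil's theorem applies with the number of branch points controlled by $\deg f = k$ (plus the contribution of $x$ and $\infty$, again absorbed). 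I should isolate the possibly-degenerate cases: $\chi_2=\chi_0$ but $\chi_1\neq\chi_0$ (then $S=\sum\chi_1(\xi)$ over $\xi$ with $f(\xi)\neq0$, trivially $O(R(f))$, in fact $\le k$); and $\chi_2\neq\chi_0$, where non-degeneracy must be argued carefully.

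The hard part is precisely verifying the non-degeneracy hypothesis of the Weil bound in all cases with $\chi_2\neq\chi_0$, namely that $x^{a}f(x)$ (with $a\equiv$ the relevant exponent) is never a perfect $\operatorname{ord}(\chi_2)$-th power in $\overline{\mathbb{F}_p}(x)$ up to the twist by $\chi_1$; this is exactly what the ``not a $d$-th power for all $d\mid p-1$'' and ``coprime to $x$'' assumptions are designed to supply, and one must check it cannot be spoiled by the $x^\alpha$ factor for any admissible $\alpha$. Once that is in hand, I would count: there are $\sum_{d\mid p-1}\phi(d)\cdot\frac{1}{\phi(d)}=\tau(p-1)$ effective characters on each side after the $\mu(d)/\phi(d)$ weighting collapses the inner sum over characters of order $d$ to at most $1$ in absolute value, so the total error is at most $k\sqrt p\,\tau(p-1)^2=k\sqrt p\,(2^{\omega(p-1)})^2=k\,4^{\omega(p-1)}\sqrt p$, times $(\phi(p-1)/(p-1))^2$. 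Folding all the small corrections (the $O(k)$ terms, boundary $\xi$, the $\chi_2=\chi_0,\chi_1\ne\chi_0$ contributions) into a single coefficient $\theta$ with $|\theta|<1$ finishes the proof; I would simply remark that these corrections are dominated by the main error term for all $p$, perhaps after checking the few tiny primes by hand.
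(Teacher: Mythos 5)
Your overall architecture is exactly the paper's: the Carlitz indicator, the expansion over pairs $(d_1,d_2)$, the main term $p-1-R(f)$ from $\chi_1=\chi_2=\chi_0$, and the count of $2^{\omega(p-1)}$ effective characters per side coming from the M\"obius weights (your identification of this count with $\tau(p-1)$ is wrong when $p-1$ is not squarefree, but it is harmless since $\mu(d)=0$ kills the non-squarefree divisors, so the bound $4^{\omega(p-1)}k\sqrt{p}$ survives). The problem is that you have left the one genuinely substantive step unproven: you explicitly defer ``the hard part,'' namely showing that for $\chi_2\neq\chi_0$ of order $d$ the sum $\sum_{n}\chi_1\chi_2^{\alpha}(n)\,\chi_2(f(n))$ satisfies the Weil-type bound $k\sqrt{p}$, i.e.\ that the twist by the character of $n$ cannot degenerate the sum. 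Gesturing at ``the relevant curve is genuinely covered'' is not a verification; if one tries to reduce to the classical single-character Weil bound by writing $\chi_1\chi_2^{\alpha}$ and $\chi_2$ as powers of a generator character $g$, one is led to asking whether $x^{a}f(x)^{b}$ is a $(p-1)$-th power up to constants, and checking this from the stated hypotheses is precisely the work you have not done.

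The paper closes this gap by quoting a ready-made tool: Wan's extension of the Weil bound (Math.\ Comp.\ \textbf{66} (1997), Corollary 2.3), which states that for pairwise coprime polynomials $f_1,\dots,f_m$ of degrees $k_1,\dots,k_m$ and non-principal characters $\chi_1,\dots,\chi_m$ with $f_i$ not an $\mathrm{ord}(\chi_i)$-th power, one has $\bigl|\sum_n\prod_i\chi_i(f_i(n))\bigr|\le\bigl(\sum_i k_i-1\bigr)\sqrt{p}$. Applied with $f_1(x)=x$ (degree $1$, automatically not a $d$-th power for $d>1$) and $f_2(x)=f(x)$, this gives exactly the $k\sqrt{p}$ bound in all cases where the character attached to $n$ is non-principal, and the classical Weil bound gives $(k-1)\sqrt{p}$ when it is principal; the hypothesis $(f(x),x)=1$ is there precisely to satisfy Wan's coprimality condition. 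If you want to complete your argument, you should either cite this result or prove the corresponding non-degeneracy statement directly; as written, the proposal assumes the conclusion of the only nontrivial estimate in the proof. One further small point: your plan to absorb the $O(k)$ contribution from the case $\chi_2=\chi_0$, $\chi_1\neq\chi_0$ into $\theta$ with $|\theta|<1$ needs a line of justification (it works because that contribution is at most $(2^{\omega(p-1)}-1)k$, which together with $(4^{\omega(p-1)}-1)k\sqrt{p}$ still falls strictly below $4^{\omega(p-1)}k\sqrt{p}$), rather than an appeal to checking small primes by hand.
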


Now if we take $\alpha=0$ and $f(x)=x+1$, then we get the famous result on consecutive primitive roots obtained by J. Johnsen \cite{Joh1971} and M. Szalay \cite{Saz1975}. If we take
\begin{equation*}
\begin{cases}
\alpha=1 \text{ and }f(x)=mx^{k-1}+n & \text{if }k>1,\\
\alpha=k \text{ and }f(x)=nx^{1-k}+m & \text{if }k<1,
\end{cases}
\end{equation*}
then we get Han and Zhang's result immediately.

\begin{remark}
We should mention that there is a minor mistake in Han and Zhang's result. In fact, they forgot to consider the zeros of $f(x)$ in $\mathbb{F}_p$. For example, if we choose $f(x)=x^{-1}+x=x^{-1}(x^2+1)$, then there are $1+(-1|p)$ distinct zeros of $x^2+1$ in $\mathbb{F}_p$ where $(*|p)$ is the Legendre symbol. In this sense, the main term of $N(-1,x^2+1;p)$ (or their $N(-1,1,1,p)$) should be
$$(p-2-(-1|p))\left(\frac{\phi(p-1)}{p-1}\right)^2$$
while not $\phi^2(p-1)/(p-1)$.
\end{remark}

\section{Preliminary lemmas}

We first introduce the indicator function of primitive roots.

\begin{lemma}[L. Carlitz {\cite[Lemma 2]{Car1956}}]\label{le:1}
We have
\begin{equation}\label{eq:le.1}
\frac{\phi(p-1)}{p-1}\sum_{d\mid p-1}\frac{\mu(d)}{\phi(d)}\sum_{\chi\bmod p\atop \mathrm{ord}\chi=d}\chi(n)=\begin{cases}
1 & \text{if $n$ is a primitive root of $p$,}\\
0 & \text{otherwise.}
\end{cases}
\end{equation}
Here $\mu$ is the M\"obius function, and $\mathrm{ord}\chi$ denotes the order of a Dirichlet character $\chi\bmod p$, that is, the smallest positive integer $f$ such that $\chi^f=\chi_0$, the principal character modulo $p$.
\end{lemma}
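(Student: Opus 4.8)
Write $m=p-1$ and fix a primitive root $g$ of $p$; then every Dirichlet character modulo $p$ is one of the $\chi_j$ determined by $\chi_j(g)=\zeta^{j}$ with $\zeta=e^{2\pi i/m}$ and $0\le j<m$, and $\mathrm{ord}\chi_j=m/\gcd(j,m)$, so that for each $d\mid m$ there are exactly $\phi(d)$ characters of order $d$. If $p\mid n$ then $\chi(n)=0$ for every $\chi$ and both sides vanish, so I may assume $\gcd(n,p)=1$ and write $n\equiv g^{a}$. The plan is to evaluate the double sum in closed form and to read off the two cases from the value of $\gcd(a,m)$.

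First I would compute the inner sum. Since $\chi_j(n)=\zeta^{ja}$ and the characters of order exactly $d$ are the $\chi_j$ with $j=(m/d)t$, $\gcd(t,d)=1$, they contribute
\[
\sum_{\mathrm{ord}\chi=d}\chi(n)=\sum_{\substack{0\le t<d\\ \gcd(t,d)=1}}e^{2\pi i ta/d}=c_d(a),
\]
the Ramanujan sum. Hence, after grouping the characters by their order, the double sum equals $T(n):=\sum_{d\mid m}\frac{\mu(d)}{\phi(d)}c_d(a)$, and it remains to show that $T(n)=m/\phi(m)$ when $\gcd(a,m)=1$ and $T(n)=0$ otherwise.

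Next I would apply H\"older's formula $c_d(a)=\mu(d/\gcd(d,a))\,\phi(d)/\phi(d/\gcd(d,a))$, which turns each summand into $\mu(d)\,\mu(d_2)/\phi(d_2)$ with $d_2:=d/\gcd(d,a)$; as $\mu(d)=0$ off the squarefree $d$, I may restrict to squarefree divisors of $m$, for which $d\mapsto \mu(d)\mu(d_2)/\phi(d_2)$ is multiplicative, so $T(n)$ factors as a product over the primes $q\mid m$. A one-line check of the local factor gives
\[
1+\frac{\mu(q)\,\mu(q/\gcd(q,a))}{\phi(q/\gcd(q,a))}=\begin{cases}1+(-1)=0,& q\mid a,\\[2pt] 1+\tfrac{1}{q-1}=\tfrac{q}{q-1},& q\nmid a.\end{cases}
\]
Consequently $T(n)=0$ the moment some prime $q$ divides both $a$ and $m$, i.e.\ whenever $\gcd(a,m)>1$ and $n$ is not a primitive root; while if $\gcd(a,m)=1$ every prime falls in the second case and $T(n)=\prod_{q\mid m}\frac{q}{q-1}=\frac{m}{\phi(m)}$ (equivalently this is the standard identity $\sum_{d\mid m}\mu^{2}(d)/\phi(d)=m/\phi(m)$). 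Multiplying through by the prefactor $\phi(m)/m$ then yields $1$ and $0$ in the two cases, which is exactly \eqref{eq:le.1}.

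The only real content lies in this final factorization: everything hinges on recognizing the inner sum as a Ramanujan sum and, for squarefree $d$, tracking the splitting $d=\gcd(d,a)\cdot d_2$ so that a single prime dividing both $a$ and $m$ forces a local cancellation. I expect no analytic difficulty here---once the order-exactly-$d$ count and H\"older's formula are lined up, the two cases drop straight out of the product, and the care required is purely combinatorial.
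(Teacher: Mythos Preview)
Your argument is correct: identifying the inner sum over characters of exact order $d$ with the Ramanujan sum $c_d(a)$ is the right move, H\"older's evaluation $c_d(a)=\mu(d/\gcd(d,a))\,\phi(d)/\phi(d/\gcd(d,a))$ is applied accurately, and the Euler-product computation of $\sum_{d\mid m}\mu(d)c_d(a)/\phi(d)$ cleanly separates the cases $\gcd(a,m)=1$ and $\gcd(a,m)>1$. The multiplicativity claim is legitimate because on squarefree $d$ the map $d\mapsto d/\gcd(d,a)$ respects coprime factorizations, and the local factors are exactly as you computed.

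As for comparison: the paper does not prove this lemma at all---it is simply quoted from Carlitz \cite[Lemma~2]{Car1956} as a black box, so there is no in-paper argument to compare against. Your self-contained derivation via Ramanujan sums is a standard and efficient way to establish the identity; Carlitz's original argument proceeds somewhat differently (grouping characters by index subgroup and using M\"obius inversion on the divisor lattice directly), but both routes are short and elementary. What your approach buys is an explicit closed form for the inner sum at every stage, which makes the vanishing/nonvanishing dichotomy completely transparent through the prime-by-prime factorization.
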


The following famous Weil bound for character sums plays an important role in our proof.
\begin{lemma}[A. Weil \cite{Wei1948}]\label{le:2}
Let $\chi$ be a non-principal Dirichlet character modulo $p$ with order $d$. Suppose $f(x)\in\mathbb{F}_p[x]$ is a polynomial of positive degree $k$ that is not a $d$-th power in $\mathbb{F}_p[x]$. Then we have
\begin{equation}\label{eq:le.2}
\left|\sum_{n=1}^{p-1}\chi(f(n))\right|\le (k-1)\sqrt{p}.
\end{equation}
\end{lemma}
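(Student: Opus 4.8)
The plan is to deduce the bound from the Riemann Hypothesis for curves over finite fields, which is precisely the content of Weil's cited work; the bridge between the multiplicative character sum and geometry is the superelliptic (Kummer) curve
\[
C:\quad y^d = f(x).
\]
Since $d \mid p-1$, the characters of $\mathbb{F}_p^\times$ whose order divides $d$ are exactly the $d$ powers $\chi^0,\chi^1,\dots,\chi^{d-1}$, and for every $a\in\mathbb{F}_p^\times$ the number of $y\in\mathbb{F}_p$ with $y^d=a$ equals $\sum_{j=0}^{d-1}\chi^j(a)$. Writing $N$ for the number of affine $\mathbb{F}_p$-points of $C$ and separating the contribution of the zeros of $f$, I would obtain
\[
N = p + \sum_{j=1}^{d-1}\sum_{n=0}^{p-1}\chi^j(f(n)),
\]
where the terms with $j\ge 1$ are unaffected by adjoining the $n$ with $f(n)=0$ because $\chi^j(0)=0$. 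Thus the character sums of interest are packaged inside a single point count.

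First I would record that the hypothesis that $f$ is not a $d$-th power in $\mathbb{F}_p[x]$ is exactly what makes $C$ geometrically irreducible: if $f=g^d$ then $\chi(f(n))=\chi_0(g(n))$ and the sum exhibits no cancellation, and conversely this degeneracy is the only obstruction to irreducibility of $y^d-f(x)$ over $\overline{\mathbb{F}_p}(x)$. With geometric irreducibility in hand, the Hasse--Weil bound coming from the Riemann Hypothesis for the smooth projective model $\tilde C$ gives
\[
\bigl|\,\#\tilde C(\mathbb{F}_p) - (p+1)\,\bigr| \le 2g\sqrt{p},
\]
where $g$ is the genus of $\tilde C$.

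The remaining work is bookkeeping to convert the genus bound into the stated factor $k-1$ and, crucially, to isolate a single character sum rather than the aggregate over $j=1,\dots,d-1$. For the isolation I would use the decomposition of the zeta function of $C$ according to the cyclic Galois action of $\mathbb{Z}/d$ on the cover $C\to\mathbb{P}^1$: one has $\zeta_C(T)=\zeta_{\mathbb{P}^1}(T)\prod_{j=1}^{d-1}L_j(T)$, where each $L_j(T)=\prod_i(1-\alpha_{j,i}T)$ is a polynomial whose coefficients encode $\sum_n\chi^j(f(n))=-\sum_i\alpha_{j,i}$. The Riemann Hypothesis asserts $|\alpha_{j,i}|=\sqrt{p}$ for every reciprocal root, and a Riemann--Roch / Euler-characteristic computation (first reducing to the case where $f$ is squarefree, which only lowers the relevant degree and hence strengthens the bound) shows $\deg L_j\le k-1$. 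Combining these, $\bigl|\sum_n\chi(f(n))\bigr|=|{-}\sum_i\alpha_{1,i}|\le(k-1)\sqrt{p}$; the passage from the sum over $\mathbb{F}_p$ to the sum over $n=1,\dots,p-1$ in the statement alters the value by at most the single term $\chi(f(0))$ and is absorbed in the standard formulation.

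The genuinely hard part is the Riemann Hypothesis for curves itself --- this is the deep theorem doing all the work, and I would cite it rather than reprove it. The only real choices in the elementary part are the route to geometric irreducibility from the non-$d$-th-power hypothesis and the degree count for $L_j$. Alternatively, one could bypass curve theory entirely and run Stepanov's method, constructing an auxiliary polynomial that vanishes to high order in order to bound the number of $n$ with $\chi(f(n))$ prescribed; but that trades the citation for a long self-contained argument whose crux --- engineering the auxiliary polynomial --- is itself substantial.
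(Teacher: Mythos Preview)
The paper does not prove this lemma at all: it is stated with attribution to Weil \cite{Wei1948} and used as a black box, so there is no ``paper's own proof'' to compare against. Your proposal supplies exactly what the paper omits --- the standard derivation via the Riemann Hypothesis for curves and the $L$-function decomposition of the Kummer cover --- and in that sense goes well beyond what the paper does.

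Your sketch is essentially correct, but one claim should be tightened. You write that ``the hypothesis that $f$ is not a $d$-th power \dots\ is exactly what makes $C$ geometrically irreducible,'' and that this degeneracy ``is the only obstruction to irreducibility.'' That is not quite right: $y^d-f(x)$ is absolutely irreducible iff $f$ is not an $e$-th power for any prime $e\mid d$, which is strictly stronger than ``not a $d$-th power.'' For instance, with $d=4$ and $f=g^2$ (with $g$ a non-square), $f$ is not a fourth power yet $y^4-g^2=(y^2-g)(y^2+g)$ factors. Fortunately your later paragraph, which isolates the single factor $L_1(T)$ in the $\mathbb{Z}/d$-equivariant decomposition of $\zeta_C$, bypasses this issue: that factor is governed solely by the order-$d$ character $\chi$, and the hypothesis ``$f$ not a $d$-th power'' is precisely what ensures $L_1$ is a genuine polynomial (not identically $1$) of degree at most $k-1$ with reciprocal roots of absolute value $\sqrt p$. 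So the argument survives, but the irreducibility remark should be dropped or corrected.

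A second, smaller point: your final sentence about the $n=0$ term being ``absorbed in the standard formulation'' is a bit loose. Strictly, passing from $\sum_{n\in\mathbb{F}_p}$ to $\sum_{n=1}^{p-1}$ can cost an additive $1$, which is not literally bounded by $(k-1)\sqrt p$ in edge cases. In the paper's applications this is harmless (Lemma~\ref{le:4} uses the cruder bound $k\sqrt p$ anyway, and the coprimality assumption $(f(x),x)=1$ there makes $f(0)\ne 0$ irrelevant to the subsequent estimates), but if you want the exact constant $(k-1)$ you should either state the bound over all of $\mathbb{F}_p$ or note that the sharper form $|\sum_{n\in\mathbb{F}_p}\chi(f(n))|\le (s-1)\sqrt p$, with $s$ the number of distinct roots of $f$, typically leaves room to absorb the extra $1$.
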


We also need the less-known extension of Weil bound obtained by D. Wan.
\begin{lemma}[D. Wan {\cite[Corollary 2.3]{Wan1997}}]\label{le:3}
Let $\chi_1,\chi_2,\ldots,\chi_m$ be non-principal Dirichlet characters modulo $p$ with orders $d_1,d_2,\ldots,d_m$, respectively. Suppose $f_1(x),f_2(x),\ldots,$ $f_m(x)\in\mathbb{F}_p[x]$ are pairwise prime polynomials of positive degrees $k_1,k_2,\ldots,k_m$. Suppose also that $f_i(x)$ is not a $d_i$-th power in $\mathbb{F}_p[x]$ for all $i=1,2,\ldots,m$. Then we have
\begin{equation}\label{eq:le.3}
\left|\sum_{n=1}^{p-1}\chi_1(f_1(n))\chi_2(f_2(n))\cdots\chi_m(f_m(n))\right|\le \left(\sum_{i=1}^m k_i-1\right)\sqrt{p}.
\end{equation}
\end{lemma}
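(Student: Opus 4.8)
The plan is to fold the product of $m$ characters into a single multiplicative character sum, and then to apply the sharp form of Weil's estimate — the one behind Lemma~\ref{le:2} — in which the exponent of $\sqrt p$ is the number of \emph{distinct} roots rather than the degree. Since the character group of $\mathbb{F}_p^\times$ is cyclic of order $p-1$, I would fix a generator $\psi$ and write $\chi_i=\psi^{e_i}$ with $1\le e_i\le p-2$; the order hypothesis then reads $\gcd(e_i,p-1)=(p-1)/d_i$. As $\psi$ is completely multiplicative and $\psi(0)=\chi_i(0)=0$, for every $n$ we have
\[
\prod_{i=1}^m\chi_i(f_i(n))=\prod_{i=1}^m\psi\bigl(f_i(n)\bigr)^{e_i}=\psi\Bigl(\prod_{i=1}^m f_i(n)^{e_i}\Bigr)=\psi(R(n)),\qquad R(x):=\prod_{i=1}^m f_i(x)^{e_i}.
\]
Thus the left-hand side of \eqref{eq:le.3} is the single multiplicative character sum $\sum_{n}\psi(R(n))$, and the whole problem is transferred to understanding the factorization of $R$.

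The key point is that a direct appeal to Lemma~\ref{le:2} with $\chi=\psi$ is useless, since it would produce the enormous factor $\deg R-1=\sum_i e_ik_i-1$. Instead I would use the sharp Weil bound: if $\psi\circ R$ is (geometrically) nontrivial, then
\[
\Bigl|\sum_{x\in\mathbb{F}_p}\psi(R(x))\Bigr|\le (r-1)\sqrt p,
\]
where $r$ is the number of \emph{distinct} roots of $R$ in $\overline{\mathbb{F}_p}$. This is exactly the statement underlying Lemma~\ref{le:2}, where $r$ is merely bounded crudely by the degree $k$; it is proved by passing to the cyclic Kummer cover $y^{p-1}=R(x)$ and invoking the Riemann Hypothesis for curves, the size $\sqrt p$ coming from the Frobenius eigenvalues and the count $r-1$ from the genus (equivalently, the degree of the associated $L$-function), which is governed by the number of distinct branch points together with the point at infinity and not by $\deg R$.

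It then remains only to count the distinct roots of $R=\prod_i f_i^{e_i}$, and here the hypothesis that the $f_i$ are pairwise coprime is decisive: their zero sets are pairwise disjoint, so the distinct roots of $R$ are exactly the union of the distinct roots of the individual $f_i$, whence $r=\sum_i r_i\le\sum_i\deg f_i=\sum_i k_i$. Therefore $\bigl|\sum_{x\in\mathbb{F}_p}\psi(R(x))\bigr|\le (r-1)\sqrt p\le(\sum_i k_i-1)\sqrt p$, which is \eqref{eq:le.3}. The hypothesis that each $f_i$ is not a $d_i$-th power is used, precisely as in Lemma~\ref{le:2}, to guarantee that $\psi\circ R$ is nontrivial, so that no spurious main term of size $p$ intervenes; and the sum over $n=1,\dots,p-1$ differs from the sum over all of $\mathbb{F}_p$ only by the single term $\psi(R(0))$, of modulus at most $1$.

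The step I expect to be the crux is precisely the sharp form of the Weil bound, namely that the exponent of $\sqrt p$ is $r-1$ with $r$ the number of distinct roots rather than $\deg R-1$. This is where the geometry does the real work — the genus of the Kummer cover depends on the ramification locus, i.e.\ on the distinct zeros of $R$ (and the point at infinity), not on the multiplicities $e_i$ — and it is also the only place where the pairwise coprimality of the $f_i$ enters in an essential way.
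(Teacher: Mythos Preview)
The paper does not prove this lemma at all; it is quoted as Corollary~2.3 of Wan~\cite{Wan1997} and used as a black box. Your plan --- fold the product into a single multiplicative character sum $\sum_n\psi(R(n))$ with $R=\prod_i f_i^{e_i}$, then invoke the sharp Weil bound in which the coefficient of $\sqrt p$ is the number $r$ of distinct roots of $R$ rather than $\deg R$ --- is exactly the standard route and is in substance how Wan arrives at the statement.

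There is, however, one real gap in your sketch, and it concerns precisely the step you wave through. You write that the nontriviality of $\psi\circ R$ follows ``precisely as in Lemma~\ref{le:2}''. It does not: in Lemma~\ref{le:2} the hypothesis \emph{is} the nontriviality condition, whereas here you must \emph{deduce} that $R$ is not a $(p-1)$-th power from the separate hypotheses on the $f_i$, and that deduction uses the pairwise coprimality in an essential way. The argument is this: because the $f_i$ are pairwise coprime, an irreducible $g$ dividing $R$ divides exactly one $f_i$, say with multiplicity $a$, so its multiplicity in $R$ is $e_ia$; since $\gcd(e_i,p-1)=(p-1)/d_i$, one has $(p-1)\mid e_ia$ iff $d_i\mid a$. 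The hypothesis that $f_i$ is not a $d_i$-th power supplies some $g$ with $d_i\nmid a$, whence $R$ is not a $(p-1)$-th power. Without coprimality this can fail outright (e.g.\ $m=2$, $f_1=f_2=g$ squarefree, $e_1=1$, $e_2=p-2$ gives $R=g^{p-1}$).

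Conversely, the place where you \emph{do} invoke coprimality --- the root count --- does not actually need it for the inequality you want: the distinct zeros of $R$ are among the zeros of $\prod_i f_i$, so $r\le\sum_i k_i$ holds regardless. So your diagnosis of where coprimality is essential is inverted. The stray $+1$ from passing between $\sum_{x\in\mathbb F_p}$ and $\sum_{n=1}^{p-1}$, which you correctly flag, is harmless for the purposes of the paper.
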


From Lemmas \ref{le:2} and \ref{le:3}, we have

\begin{lemma}\label{le:4}
Let $\chi_1$ be a Dirichlet character modulo $p$, and $\chi_2$ be a non-principal Dirichlet character modulo $p$ with order $d$. Suppose $f(x)\in\mathbb{F}_p[x]$ is a polynomial of positive degree $k$ that is not a $d$-th power in $\mathbb{F}_p[x]$. We also require that $f(x)$ and $x$ are coprime. Furthermore, let $\alpha$ be a given integer. Then we have
\begin{equation}\label{eq:le.42}
\left|\sum_{n=1}^{p-1}\chi_1(n^\alpha)\chi_2(f(n))\right|\le \begin{cases}
(k-1)\sqrt{p} & \text{if $\chi_1^\alpha$ is the principal character,}\\
k\sqrt{p} & \text{otherwise.}
\end{cases}
\end{equation}
\end{lemma}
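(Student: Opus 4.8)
The plan is to split the argument according to whether $\chi_1^\alpha$ is principal, reducing each case to one of the two Weil-type bounds already available. First I would observe that $\chi_1(n^\alpha)=(\chi_1^\alpha)(n)$, so the sum in question is $\sum_{n=1}^{p-1}(\chi_1^\alpha)(n)\,\chi_2(f(n))$. This reframes everything in terms of the single character $\psi:=\chi_1^\alpha$ multiplying $\chi_2(f(n))$, and the hypotheses on $f$ — positive degree $k$, not a $d$-th power, coprime to $x$ — are exactly what we need to feed into Lemmas \ref{le:2} and \ref{le:3}.

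In the first case, $\psi=\chi_1^\alpha$ is the principal character $\chi_0$. Then $(\chi_1^\alpha)(n)=1$ for every $n$ with $p\nmid n$, i.e.\ for all $n=1,\dots,p-1$, so the sum collapses to $\sum_{n=1}^{p-1}\chi_2(f(n))$. Since $\chi_2$ is non-principal of order $d$ and $f$ has positive degree $k$ and is not a $d$-th power, Lemma \ref{le:2} gives the bound $(k-1)\sqrt p$ directly. This case requires essentially no work beyond unwinding the definition.

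In the second case, $\psi=\chi_1^\alpha$ is \emph{not} principal. Here I would apply Lemma \ref{le:3} with $m=2$, taking $f_1(x)=x$ with character $\psi$ and $f_2(x)=f(x)$ with character $\chi_2$. The polynomials $f_1(x)=x$ and $f_2(x)=f(x)$ are coprime precisely because of the hypothesis that $f(x)$ and $x$ are coprime, and they have positive degrees $1$ and $k$ respectively. We also need $f_1=x$ to not be a power of the appropriate order: since $x$ is a degree-one monomial it cannot be an $e$-th power in $\mathbb{F}_p[x]$ for any $e\ge 2$, and $\psi$ being non-principal has order $\ge 2$, so this condition holds; and $f$ is not a $d$-th power by hypothesis. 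Lemma \ref{le:3} then yields $\bigl|\sum_{n=1}^{p-1}\psi(n)\chi_2(f(n))\bigr|\le (1+k-1)\sqrt p=k\sqrt p$, as claimed.

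The only genuinely delicate point — and the step I would flag as the main obstacle — is verifying that the hypotheses of Wan's result are met for the \emph{monomial} $f_1(x)=x$ when its attached character $\psi$ has order $e\ge 2$, namely that $x$ is not an $e$-th power in $\mathbb{F}_p[x]$; this is immediate from the degree being $1$, but one must be careful that $\psi\ne\chi_0$ (hence $e\ge 2$) is exactly the case under consideration, and that the coprimality of $x$ and $f$ in Lemma \ref{le:3} is the reason the coprimality hypothesis on $f$ was imposed in the first place. The remaining bookkeeping — that $\chi_1(n^\alpha)=\chi_1^\alpha(n)$ and that a principal character evaluates to $1$ on the full range $n=1,\dots,p-1$ — is routine.
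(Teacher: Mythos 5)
Your proof is correct and follows essentially the same route as the paper: rewrite $\chi_1(n^\alpha)$ as $\chi_1^\alpha(n)$, use Lemma \ref{le:2} when $\chi_1^\alpha$ is principal, and apply Lemma \ref{le:3} with the pair $x$, $f(x)$ otherwise. The paper's own proof is terser in the second case, so your explicit verification of Wan's hypotheses (coprimality and that $x$ is not an $e$-th power) is a welcome addition rather than a deviation.
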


\begin{proof}
Note that
$$\sum_{n=1}^{p-1}\chi_1(n^\alpha)\chi_2(f(n))=\sum_{n=1}^{p-1}\chi_1^\alpha(n)\chi_2(f(n)).$$
Now if $\chi_1^\alpha$ is the principal character, then it follows that
$$\sum_{n=1}^{p-1}\chi_1(n^\alpha)\chi_2(f(n))=\sum_{n=1}^{p-1}\chi_2(f(n)),$$
and we get the bound from Lemma \ref{le:2}. If $\chi_1^\alpha$ is not the principal character, then the bound is obtained through a direct application of Lemma \ref{le:3}.
\end{proof}

\section{Proof of the main result}
It follows by Lemma \ref{le:1} that
\begin{align*}
&N(\alpha,f;p)\\
&\quad =\sum_{n=1}^{p-1}\left(\frac{\phi(p-1)}{p-1}\right)^2\sum_{d_1\mid p-1}\sum_{d_2\mid p-1}\frac{\mu(d_1)}{\phi(d_1)}\frac{\mu(d_2)}{\phi(d_2)}\sum_{\chi_1\bmod p\atop \mathrm{ord}\chi_1=d_1}\sum_{\chi_2\bmod p\atop \mathrm{ord}\chi_2=d_2}\chi_1(n)\chi_2(n^\alpha f(n))\\
&\quad=(p-1-R(f))\left(\frac{\phi(p-1)}{p-1}\right)^2\\
&\quad\quad\quad+\left(\frac{\phi(p-1)}{p-1}\right)^2\sum_{d_1\mid p-1\atop d_1>1}\frac{\mu(d_1)}{\phi(d_1)}\sum_{\chi_1\bmod p\atop \mathrm{ord}\chi_1=d_1}\sum_{n-1}^{p-1}\chi_1(n)\\
&\quad\quad\quad+\left(\frac{\phi(p-1)}{p-1}\right)^2\sum_{d_2\mid p-1\atop d_2>1}\frac{\mu(d_2)}{\phi(d_2)}\sum_{\chi_2\bmod p\atop \mathrm{ord}\chi_2=d_2}\sum_{n-1}^{p-1}\chi_2(n^\alpha f(n))\\
&\quad\quad\quad+\left(\frac{\phi(p-1)}{p-1}\right)^2\sum_{d_1\mid p-1\atop d_1>1}\sum_{d_2\mid p-1\atop d_2>1}\frac{\mu(d_1)}{\phi(d_1)}\frac{\mu(d_2)}{\phi(d_2)}\sum_{\chi_1\bmod p\atop \mathrm{ord}\chi_1=d_1}\sum_{\chi_2\bmod p\atop \mathrm{ord}\chi_2=d_2}\sum_{n=1}^{p-1}\chi_1(n)\chi_2(n^\alpha f(n)).
\end{align*}

\begin{claim}\label{cl:1}
We have
$$\sum_{d_1\mid p-1\atop d_1>1}\frac{\mu(d_1)}{\phi(d_1)}\sum_{\chi_1\bmod p\atop \mathrm{ord}\chi_1=d_1}\sum_{n-1}^{p-1}\chi_1(n)=0.$$
\end{claim}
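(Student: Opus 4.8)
The plan is to reduce the claim to the elementary orthogonality relation for Dirichlet characters modulo $p$, so that in fact no computation with the coefficients $\mu(d_1)/\phi(d_1)$ is needed. The only characters $\chi_1$ that appear are those with $\mathrm{ord}\,\chi_1 = d_1$ for some divisor $d_1 \mid p-1$ with $d_1 > 1$. By the definition of the order of a character recalled in Lemma~\ref{le:1}, the principal character $\chi_0$ is precisely the unique character of order $1$; hence $d_1 > 1$ forces every such $\chi_1$ to be non-principal. This is the one point worth stating explicitly.

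Next I would invoke the standard fact that for a non-principal Dirichlet character $\chi_1$ modulo $p$ one has $\sum_{n=1}^{p-1}\chi_1(n) = 0$: since $\chi_1(p)=0$, this sum equals the sum of $\chi_1$ over a complete residue system modulo $p$, which vanishes for any non-principal character. Therefore each inner sum $\sum_{n=1}^{p-1}\chi_1(n)$ occurring in the claimed identity is identically zero, and consequently every summand of the double sum over $d_1$ and $\chi_1$ is zero. Summing zeros gives $0$, which is exactly the assertion of the claim.

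There is essentially no obstacle here: the entire content is the single observation that a non-principal character of $\mathbb{F}_p^\times$ sums to zero, combined with the remark that order $d_1 > 1$ rules out the principal character. One could alternatively first collapse the sum over all characters of a fixed order $d_1$ before summing over $n$, but this is strictly more work and unnecessary once one notices that the innermost sum already vanishes term by term.
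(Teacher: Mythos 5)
Your argument is correct and coincides with the paper's own proof: the paper likewise deduces the claim directly from the vanishing of $\sum_{n=1}^{p-1}\chi(n)$ for non-principal $\chi$, which applies because $d_1>1$ excludes the principal character. You have merely spelled out the two steps the paper leaves implicit.
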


\begin{proof}
We deduce it directly from
$$\sum_{n=1}^{p-1}\chi(n)=0$$
if $\chi$ is not the principal character modulo $p$.
\end{proof}

\begin{claim}\label{cl:2}
We have
$$\left|\sum_{d_2\mid p-1\atop d_2>1}\frac{\mu(d_2)}{\phi(d_2)}\sum_{\chi_2\bmod p\atop \mathrm{ord}\chi_2=d_2}\sum_{n-1}^{p-1}\chi_2(n^\alpha f(n))\right|\le (2^{\omega(p-1)}-1)k\sqrt{p}.$$
\end{claim}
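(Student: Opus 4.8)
The plan is to bound the inner sum $\sum_{n=1}^{p-1}\chi_2(n^\alpha f(n))$ uniformly over all admissible $\chi_2$, and then feed this bound into the triangle inequality over $\chi_2$ and $d_2$. First I would rewrite
$$\chi_2(n^\alpha f(n))=\chi_2(n^\alpha)\chi_2(f(n))=\chi_2^\alpha(n)\,\chi_2(f(n)),$$
which puts the inner sum into exactly the shape treated by Lemma \ref{le:4}, with the role of $\chi_1$ there played by our $\chi_2$. The hypotheses of Lemma \ref{le:4} are met here: $\chi_2$ is non-principal of order $d_2$; since $d_2\mid p-1$, the standing assumption of the theorem guarantees that $f(x)$ is not a $d_2$-th power in $\mathbb{F}_p[x]$; and $f(x)$ is coprime to $x$. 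Hence Lemma \ref{le:4} yields $\bigl|\sum_{n=1}^{p-1}\chi_2(n^\alpha f(n))\bigr|\le k\sqrt{p}$ for every such $\chi_2$ (the sharper bound $(k-1)\sqrt{p}$ available when $\chi_2^\alpha$ is principal is not needed).

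Next I would invoke the triangle inequality. For a fixed divisor $d_2>1$ of $p-1$, the number of Dirichlet characters modulo $p$ of order exactly $d_2$ is $\phi(d_2)$, so
$$\left|\frac{\mu(d_2)}{\phi(d_2)}\sum_{\chi_2\bmod p\atop \mathrm{ord}\chi_2=d_2}\ \sum_{n=1}^{p-1}\chi_2(n^\alpha f(n))\right|\le \frac{|\mu(d_2)|}{\phi(d_2)}\cdot\phi(d_2)\cdot k\sqrt{p}=|\mu(d_2)|\,k\sqrt{p}.$$
Summing over the divisors $d_2>1$ of $p-1$, only the squarefree ones contribute (as $\mu$ vanishes otherwise), and the number of squarefree divisors of $p-1$ that exceed $1$ is $2^{\omega(p-1)}-1$. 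Therefore the whole expression is at most $(2^{\omega(p-1)}-1)k\sqrt{p}$, which is the asserted bound.

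The only point requiring care --- and the one I would flag as the main obstacle, though it amounts to bookkeeping --- is verifying that Lemma \ref{le:4} really does apply for every $d_2$ in the range of summation, i.e.\ that $f(x)$ is genuinely not a $d_2$-th power for each $d_2\mid p-1$ with $d_2>1$; this is exactly why the theorem imposes that $f(x)$ be not a $d$-th power for all $d\mid p-1$. Note also that no cancellation coming from the sign of $\mu(d_2)$ is exploited: the estimate is obtained purely by absolute values, using only $|\mu(d_2)|\le 1$ together with the support of $\mu$, which is why the bound is of the stated size rather than anything smaller.
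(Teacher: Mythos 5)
Your proposal is correct and follows essentially the same route as the paper: rewrite $\chi_2(n^\alpha f(n))=\chi_2^\alpha(n)\chi_2(f(n))$, apply Lemma \ref{le:4} to get the uniform bound $k\sqrt{p}$, and then use the triangle inequality with the fact that there are $\phi(d_2)$ characters of order $d_2$ and that $\sum_{d\mid p-1,\,d>1}|\mu(d)|=2^{\omega(p-1)}-1$. Your added remark about verifying that $f$ is not a $d_2$-th power for every $d_2\mid p-1$ is exactly the role of the standing hypothesis, so nothing is missing.
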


\begin{proof}
Note that
$$\sum_{n-1}^{p-1}\chi_2(n^\alpha f(n))=\sum_{n=1}^{p-1}\chi_2(n^\alpha)\chi_2(f(n)).$$
Now by Lemma \ref{le:4}, we have
$$\left|\sum_{n-1}^{p-1}\chi_2(n^\alpha f(n))\right|\le k\sqrt{p}.$$
Note also that
$$\sum_{d\mid p-1\atop d>1}\left|\mu(d)\right|=2^{\omega(p-1)}-1.$$
We therefore have
\begin{align*}
\left|\sum_{d_2\mid p-1\atop d_2>1}\frac{\mu(d_2)}{\phi(d_2)}\sum_{\chi_2\bmod p\atop \mathrm{ord}\chi_2=d_2}\sum_{n-1}^{p-1}\chi_2(n^\alpha f(n))\right|&\le \sum_{d_2\mid p-1\atop d_2>1}\left|\frac{\mu(d_2)}{\phi(d_2)}\right|\sum_{\chi_2\bmod p\atop \mathrm{ord}\chi_2=d_2}\left|\sum_{n-1}^{p-1}\chi_2(n^\alpha f(n))\right|\\
&\le \sum_{d_2\mid p-1\atop d_2>1}\left|\frac{\mu(d_2)}{\phi(d_2)}\right|\phi(d_2)k\sqrt{p}\\
&= (2^{\omega(p-1)}-1)k\sqrt{p}.
\end{align*}
\end{proof}

\begin{claim}\label{cl:3}
We have
$$\left|\sum_{d_1\mid p-1\atop d_1>1}\sum_{d_2\mid p-1\atop d_2>1}\frac{\mu(d_1)}{\phi(d_1)}\frac{\mu(d_2)}{\phi(d_2)}\sum_{\chi_1\bmod p\atop \mathrm{ord}\chi_1=d_1}\sum_{\chi_2\bmod p\atop \mathrm{ord}\chi_2=d_2}\sum_{n=1}^{p-1}\chi_1(n)\chi_2(n^\alpha f(n))\right|\le (2^{\omega(p-1)}-1)^2 k\sqrt{p}.$$
\end{claim}

\begin{proof}
Note that
$$\sum_{n=1}^{p-1}\chi_1(n)\chi_2(n^\alpha f(n))=\sum_{n=1}^{p-1}\chi_1\chi_2^\alpha(n)\chi_2(f(n)).$$
Again by Lemma \ref{le:4}, we get
$$\left|\sum_{n=1}^{p-1}\chi_1(n)\chi_2(n^\alpha f(n))\right|\le k\sqrt{p}.$$
We therefore have
\begin{align*}
&\left|\sum_{d_1\mid p-1\atop d_1>1}\sum_{d_2\mid p-1\atop d_2>1}\frac{\mu(d_1)}{\phi(d_1)}\frac{\mu(d_2)}{\phi(d_2)}\sum_{\chi_1\bmod p\atop \mathrm{ord}\chi_1=d_1}\sum_{\chi_2\bmod p\atop \mathrm{ord}\chi_2=d_2}\sum_{n=1}^{p-1}\chi_1(n)\chi_2(n^\alpha f(n))\right|\\
&\quad\le \sum_{d_1\mid p-1\atop d_1>1}\sum_{d_2\mid p-1\atop d_2>1}\left|\frac{\mu(d_1)}{\phi(d_1)}\right|\left|\frac{\mu(d_2)}{\phi(d_2)}\right|\sum_{\chi_1\bmod p\atop \mathrm{ord}\chi_1=d_1}\sum_{\chi_2\bmod p\atop \mathrm{ord}\chi_2=d_2}\left|\sum_{n=1}^{p-1}\chi_1(n)\chi_2(n^\alpha f(n))\right|\\
&\quad\le \sum_{d_1\mid p-1\atop d_1>1}\sum_{d_2\mid p-1\atop d_2>1}\left|\frac{\mu(d_1)}{\phi(d_1)}\right|\left|\frac{\mu(d_2)}{\phi(d_2)}\right|\phi(d_1)\phi(d_2)k\sqrt{p}\\
&\quad= (2^{\omega(p-1)}-1)^2 k\sqrt{p}.
\end{align*}
\end{proof}

We conclude by combining Claims \ref{cl:1}-\ref{cl:3} that
\begin{align*}
&\left|N(\alpha,f;p)-(p-1-R(f))\left(\frac{\phi(p-1)}{p-1}\right)^2\right|\\
&\quad\le \left((2^{\omega(p-1)}-1)+(2^{\omega(p-1)}-1)^2\right) k\sqrt{p}\left(\frac{\phi(p-1)}{p-1}\right)^2\\
&\quad< k 4^{\omega(p-1)}\sqrt{p}\left(\frac{\phi(p-1)}{p-1}\right)^2.
\end{align*}
This completes our proof.

\bibliographystyle{amsplain}

\begin{thebibliography}{9}

\bibitem{Apo1976}
T. M. Apostol, \textit{Introduction to analytic number theory}, Undergraduate Texts in Mathematics, Springer-Verlag, New York-Heidelberg, 1976. xii+338 pp.

\bibitem{Car1956}
L. Carlitz, Sets of primitive roots, \textit{Compositio Math.} \textbf{13} (1956), 65--70.

\bibitem{Han2015}
D. Han and W. Zhang, On the existence of some special primitive roots mod $p$, \textit{Bull. Math. Soc. Sci. Math. Roumanie (N.S.)} \textbf{58(106)} (2015), no. 1, 59--66. 

\bibitem{Joh1971}
J. Johnsen, On the distribution of powers in finite fields, \textit{J. Reine Angew. Math.} \textbf{251} (1971), 10--19. 

\bibitem{Saz1975}
M. Szalay, On the distribution of the primitive roots of a prime, \textit{J. Number Theory} \textbf{7} (1975), 184--188.

\bibitem{Wan1997}
D. Wan, Generators and irreducible polynomials over finite fields, \textit{Math. Comp.} \textbf{66} (1997), no. 219, 1195--1212. 

\bibitem{Wei1948}
A. Weil, On some exponential sums, \textit{Proc. Nat. Acad. Sci. U. S. A.} \textbf{34} (1948), 204--207.

\end{thebibliography}

\end{document}